\setlist[itemize]{itemsep=0ex}
\setlist[enumerate]{itemsep=0ex}
\setlist[description]{itemsep=0ex}
\def\NAT@spacechar{~}
\crefname{lem}{Lemma}{Lemmas}
\crefname{thm}{Theorem}{Theorems}
\crefname{prop}{Proposition}{Propositions}
\crefname{obs}{Observation}{Observations}
\crefname{conj}{Conjecture}{Conjectures}
\crefname{claim}{Claim}{Claims}
\newtheorem{theorem}{Theorem}
\newtheorem{lemma}[theorem]{Lemma}
\newtheorem{corollary}[theorem]{Corollary}
\renewcommand\section{\@startsection {section}{1}{\z@}{-3ex \@plus -1ex \@minus -.2ex}{2ex \@plus.2ex}{\normalfont\large\bfseries}}
\renewcommand\subsection{\@startsection{subsection}{2}{\z@}{-2.5ex\@plus -1ex \@minus -.2ex}{1.5ex \@plus .2ex}{\normalfont\normalsize\bfseries}}
\renewcommand\subsubsection{\@startsection{subsubsection}{3}{\z@}{-2ex\@plus -1ex \@minus -.2ex}{1ex \@plus .2ex}{\normalfont\normalsize\bfseries}}
 \renewcommand\paragraph{\@startsection{paragraph}{4}{\z@}{1.5ex \@plus.5ex \@minus.2ex}{-1em}{\normalfont\normalsize\bfseries}}
\renewcommand\subparagraph{\@startsection{subparagraph}{5}{\parindent}  {1.5ex \@plus.5ex \@minus .2ex}  {-1em} {\normalfont\normalsize\bfseries}}
\def\X {{\mathcal X}}
\def\C {{\mathcal C}}
\renewcommand{\geq}{\geqslant}
\renewcommand{\leq}{\leqslant}
\renewcommand{\thefootnote}{\fnsymbol{footnote}}	%A sequence of nine symbols, try it and see!
\newcommand{\defn}[1]{\textcolor{Maroon}{\emph{#1}}\index{#1}}
\newcommand{\mathdef}[2]{\textcolor{Maroon}{\emph{#1-#2}}\index{#2@#1-#2}}
\begin{document}
%%%%%%%%%%%%
\title{\bf\Large\boldmath 
Clustered Coloring of Graphs with \\
Bounded Layered Treewidth and Bounded Degree\footnote{This material is based upon work supported by the National Science Foundation under Grant No.\ DMS-1664593, DMS-1929851, DMS-1954054 and DMS-2144042.}}
%%%%%%%%%
\footnotetext[0]{First released on 22nd May 2019 as part of arXiv:1905.08969. Revised \today.}
%%%%%%%%%
\author{%
Chun-Hung Liu\footnote{Department of Mathematics, Texas A\&M University, Texas, USA, \texttt{chliu@math.tamu.edu}. Partially supported by NSF under award No.\ DMS-1664593, DMS-1929851 and DMS-1954054 and CAREER award DMS-2144042.}
\quad David R. Wood\footnote{School of Mathematics, Monash University, Melbourne, Australia, \texttt{david.wood@monash.edu}. Research supported by the Australian Research Council.}
}
\date{}
\maketitle
%%%%%%%%%%%%%%%
\begin{abstract}
The clustering of a graph coloring is the maximum size of monochromatic components.
This paper studies colorings with bounded clustering in graph classes with bounded layered treewidth, which include planar graphs, graphs of bounded Euler genus, graphs embeddable on a fixed surface with a bounded number of crossings per edge, map graphs, amongst other examples. Our main theorem says that every graph with layered treewidth at most $k$ and with maximum degree at most $\Delta$ is $3$-colorable with clustering $O(k^{19}\Delta^{37})$.
This is the first known polynomial bound on the clustering. This greatly improves upon a corresponding result of Esperet and Joret for graphs of bounded genus. 
\end{abstract}
%%%%%%%%%%%%%%%
\renewcommand{\thefootnote}{\arabic{footnote}}	
%%%%%%%%%%%%%%%
\section{Introduction}
\label{Introduction}

This paper considers graph\footnote{Let $G$ be a graph with vertex-set $V(G)$ and edge-set $E(G)$. 
For $v\in V(G)$, let $N_G(v):=\{w\in V(G): vw\in E(G)\}$ be the neighborhood of $v$, and let  $N_G[v] :=N_G(v) \cup\{v\}$. 
For $X\subseteq V(G)$, let $N_G(X):= \bigcup_{v\in X} (N_G(v) \setminus X)$ and $N_G[X]:= N_G(X)\cup X$.
Denote the subgraph of $G$ induced by $X$ by $G[X]$.
Let $\mathbb{N}:=\{1,2,\dots\}$ and $\mathbb{N}_0:=\{0,1,2,\dots\}$. 
For $m,n\in\mathbb{N}_0$, let $[m,n]:=\{m,m+1,\dots,n\}$ and $[n]:=[1,n]$.} colorings where the condition that adjacent vertices are assigned distinct colors is relaxed. Instead, we require that every monochromatic component has bounded size (for a given graph class). 
More formally, a \defn{coloring} of a graph $G$ is a function that assigns one color to each vertex of $G$. For a coloring $c$ of $G$, a \mathdef{$c$}{monochromatic component}, or simply \defn{monochromatic component}, 
is any connected component of the subgraph of $G$ induced by all the vertices assigned the same color. A coloring has \defn{clustering} $\eta$ if every monochromatic component has at most $\eta$ vertices. Our focus is on minimizing the number of colors, with small monochromatic components as a secondary goal. 
The \defn{clustered chromatic number} of a graph class $\mathcal{G}$ is the minimum integer $k$ such that for some integer $\eta$, every graph in $\mathcal{G}$ is $k$-colorable with clustering $\eta$. There have been several recent papers on this topic~\citep{NSSW19,vdHW18,HST03,ADOV03,CE19,Kawa08,KM07,LMST08,KO19,EJ14,EO16,DN17,LO17,HW19,MRW17}; see \citep{WoodSurvey} for a survey. 

Planar graphs are 4-colorable, and indeed have clustered chromatic number 4. That is, for every $c$ there is a planar graph that is not 3-colorable with clustering $c$ \cite{KMRV97}. These examples have unbounded maximum degree. This led \citet{KMRV97} to conjecture that planar graphs with bounded maximum degree are 3-colorable with bounded clustering. Note that three colors is best possible since the Hex Lemma~\citep{Gale79} says that every 2-coloring of the $n\times n$ planar triangular grid (which has  maximum degree 6) contains a monochromatic path of length at least $n$. 

\citet{EJ14} proved the conjecture of \citet{KMRV97}. In particular, they proved that every planar graph with maximum degree $\Delta$ is 3-colorable with clustering  $\Delta^{O(\Delta)}$. While \citet{EJ14} made no effort to reduce this function, their method will not lead to a  sub-exponential clustering bound. We prove the first known polynomial bound in such a result. \citet{EJ14} in fact proved 3-colorability in the more general setting of graphs embeddable on a fixed surface\footnote{The \defn{Euler genus} of an orientable surface with $h$ handles is $2h$. The \defn{Euler genus} of  a non-orientable surface with $c$ cross-caps is $c$. The \defn{Euler genus} of a graph $G$ is the minimum Euler genus of a surface in which $G$ embeds (with no crossings).}. In particular, they proved that graphs with Euler genus $g$ are 3-colorable with clustering $\Delta^{O(\Delta\,2^g)}$. We prove a polynomial bound in this setting as well. 

\begin{theorem}\label{GenusDegree}
Every graph with Euler genus at most $g$ and maximum degree at most $\Delta$ is 3-colorable with clustering $O(g^{19}\Delta^{37})$. 
\end{theorem}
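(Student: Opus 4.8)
The plan is to deduce Theorem~\ref{GenusDegree} from the more general statement that every graph with layered treewidth at most $k$ and maximum degree at most $\Delta$ is $3$-colourable with clustering $O(k^{19}\Delta^{37})$: since graphs of Euler genus $g$ have layered treewidth at most $2g+3$ (Dujmovi\'c, Morin and Wood), taking $k=2g+3$ gives clustering $O(g^{19}\Delta^{37})$. So the substance is the layered treewidth version, and the sketch below concerns that. Fix a layered tree decomposition of $G$, consisting of a tree decomposition $(B_x:x\in V(T))$ together with a partition $(V_0,V_1,\dots)$ of $V(G)$ into BFS-type layers --- every edge lies within one layer or between consecutive layers --- with $\abs{B_x\cap V_i}\leq k+1$ for all $x,i$.

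Two local ingredients are essentially routine. First, ``thickening'' preserves bounded treewidth: any union of $t$ consecutive layers inherits from $T$ a tree decomposition with bags of size at most $(k+1)t$, so it has treewidth at most $(k+1)t-1$; in particular each single layer $G[V_i]$ has treewidth at most $k$. Second, a graph of treewidth less than $w$ and maximum degree less than $\Delta$ has a \emph{tree-partition} of width $O(w\Delta)$ --- a partition of the vertices into bags indexed by a tree, with each edge inside a bag or between bags at adjacent nodes, and each bag of size $O(w\Delta)$ (Ding--Oporowski; Wood) --- and a tree-partition of width $b$ yields a $2$-colouring with clustering at most $b$, by colouring each bag according to the parity of its depth in the tree, since then every monochromatic edge lies inside a single bag. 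Hence every band of $t$ consecutive layers, and every individual layer, has a $2$-colouring with clustering polynomial in $k$, $t$ and $\Delta$.

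The crux is to stitch these local $2$-colourings into one global $3$-colouring with bounded clustering. Three colours cannot give pairwise-disjoint $2$-element palettes to a long chain of bands, so the degree bound must be used to stop a monochromatic component from percolating ``vertically'' through the layering. The plan I would follow is to pick a parameter $t$ polynomial in $k$ and $\Delta$, cut the layering into bands of length $t$ separated by single ``barrier'' layers, $2$-colour each band with $\{1,2\}$ using its tree-partition and $2$-colour each barrier layer, and then bound monochromatic components by combining the tree-partition structure inside bands with the maximum degree bound across barriers, arguing that each such component meets only boundedly many bands and, within a band, only boundedly many tree-partition bags. I expect this stitching --- bounding how far monochromatic components can spread along the layering, arranging the palettes so that three colours suffice, and tracking the blow-up through the nested constructions (the apparent origin of the large exponents $k^{19}\Delta^{37}$) --- to be the main obstacle, and it probably requires a more delicate, perhaps recursive, decomposition than the naive band partition sketched here.
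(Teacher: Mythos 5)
Your reduction of \cref{GenusDegree} to the layered treewidth statement via the Dujmovi\'c--Morin--Wood bound (layered treewidth $\leq 2g+3$) is exactly how the paper proves it; that part is correct and complete. The real content is \cref{ltwdegree}, and your sketch there has the right local ingredients (bounded treewidth plus bounded maximum degree gives a 2-colouring with clustering $O(w\Delta)$, which the paper quotes as \cref{ClusteringDegreeTreewidth}) but, as you concede, stops short at the crucial stitching step. Your band-and-barrier scheme has no mechanism to stop a monochromatic component from percolating back and forth between a band and a barrier: if the barriers are 2-coloured independently of the bands, a component can repeatedly hop from a bounded piece in a band to a bounded piece in an adjacent barrier and back, growing without bound.

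The paper's missing idea is twofold. First, partition the layers by index modulo 3 into $U_1, U_2, U_3$ with palettes $\{1,2\}$, $\{2,3\}$, $\{3,1\}$, so each monochromatic component lies in at most two consecutive layers. Second, and crucially, the 2-colourings of $U_2$ and $U_3$ are made to depend on the earlier colourings via artificial edges. Before 2-colouring $U_2$, the paper augments $G[U_2]$ by adding, for each colour-2 monochromatic component $C$ of $G[U_1]$ lying in $V_{3i+1}$, all edges between pairs of vertices in $N_G(V(C)) \cap V_{3i+2}$; \cref{enlarge} shows this augmented graph $G_2$ still has bounded treewidth per layer and bounded maximum degree, so it too is 2-colourable with bounded clustering. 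The point is that if a final colour-2 component $D$ uses a $U_1$-piece as a bridge between two $U_2$-vertices, those vertices are already adjacent in $G_2$, so $D \cap U_2$ is connected in $G_2$ and hence of bounded size; the degree bound then limits $D$ to boundedly many bridges into $U_1$, each itself a bounded monochromatic component of the $U_1$-colouring. The same augmentation is then carried out once more for $U_3$, relative to the colourings of both $U_1$ and $G_2$. This ``project each earlier monochromatic component forward as a clique on its neighbourhood'' device, together with the width-control \cref{enlarge}, is precisely the delicate step you suspected you were missing, and no recursion beyond these two augmentation passes is needed.
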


Note that \citet{LO17} proved that for all fixed $t,\Delta$ there exists a constant $c$ such that every (odd) $K_t$-minor-free graph with maximum degree at most $\Delta$ is 3-colorable with clustering $c$.  
This result extends \cref{GenusDegree} to more general graph classes but with much worse clustering $c$. 

\cref{GenusDegree} is in fact a corollary of a more general result of this paper in terms of layered treewidth. First we explain what this means. A \defn{tree-decomposition} of a graph $G$ is a pair $(T,\mathcal{X} = (X_x:x\in V(T))$, where $T$ is a tree,  and for each node $x\in V(T)$,  $X_x$ is a non-empty subset of $V(G)$ called a \defn{bag}, such that for each vertex $v\in V(G)$, the set $\{x\in V(T):v\in X_x\}$ induces a non-empty (connected) subtree of $T$, and for each edge $vw\in E(G)$ there is a node $x\in V(T)$ such that $\{v,w\}\subseteq X_x$. 
The \defn{width} of a tree-decomposition $(T,\mathcal{X})$ is $\max\{|X_x|-1: x\in V(T)\}$. The \defn{treewidth} of a graph $G$ is the minimum width of a tree-decomposition of $G$. Treewidth is a key parameter in algorithmic and structural graph theory; see \citep{Reed03,Reed97,Bodlaender-TCS98,HW17} for surveys.

A \defn{layering} of a graph $G$ is an ordered partition $(V_1,\dots,V_n)$ of $V(G)$ into (possibly empty) sets such that for each edge $vw\in E(G)$ there exists $i\in[1,n-1]$ such that $\{v,w\}\subseteq V_i\cup V_{i+1}$. 
The \defn{layered treewidth} of a graph $G$ is the minimum nonnegative integer $\ell$ such that $G$ has a tree-decomposition $(T, \mathcal{X} = (X_x:x\in V(T))$ and a layering $(V_1,\dots,V_n)$, such that $|X_x\cap V_i|\leq\ell$ for each bag $X_x$ and layer $V_i$. This says that the subgraph induced by each layer has bounded treewidth, and moreover, a single tree-decomposition of $G$ has bounded treewidth when restricted to each layer. In fact, these properties hold when considering a bounded sequence of consecutive layers. 

Layered treewidth was independently introduced by \citet{DMW17} and \citet{Shahrokhi13}. \citet{DMW17} proved that every planar graph has layered treewidth at most 3;  more generally, that every graph with Euler genus at most $g$ has layered treewidth at most $2g+3$; and most generally, that a minor-closed class has bounded layered treewidth if and only if it excludes some apex graph as a minor. Layered treewidth is of interest beyond minor-closed classes, since as described below, there are several natural graph classes that have bounded layered treewidth but contain arbitrarily large complete graph minors. 

We prove that only three colors are needed
for clustered coloring of graphs with bounded layered treewidth and with bounded maximum degree. This is the main result of the paper. 

\begin{theorem}\label{ltwdegree}
Every graph with layered treewidth at most $w$ and maximum degree at most $\Delta$ is 3-colorable with clustering $O(w^{19}\Delta^{37})$.
\end{theorem}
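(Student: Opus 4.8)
The plan is to use the layering to chop $G$ into subgraphs of bounded treewidth, clustered-colour each of those by a dedicated subroutine, and then reconcile the colourings along the chopping lines using only three colours in total. Fix a tree-decomposition $(T,\mathcal{X})$ and a layering $(V_1,\dots,V_n)$ of $G$ witnessing layered treewidth at most $w$ (so $|X_x\cap V_i|\le w$ for every bag $X_x$ and layer $V_i$), and assume $G$ is connected. The reduction rests on one elementary fact: if $I$ is any set of $s$ consecutive layers then, by restricting each bag of $(T,\mathcal{X})$ to $\bigcup_{i\in I}V_i$, the subgraph $G\big[\bigcup_{i\in I}V_i\big]$ has treewidth less than $sw$. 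So the first ingredient is a self-contained lemma about bounded-treewidth bounded-degree graphs, of the shape: \emph{every graph $H$ with treewidth at most $t$ and maximum degree at most $\Delta$ has a clustered $3$-colouring with clustering $\mathrm{poly}(t,\Delta)$}, ideally in a strengthened ``boundary-controlled'' form in which the colouring can be forced to agree with a prescribed colouring on one layer of $H$ and to have a prescribed simple form on another. I expect this building block to be where most of the exponents $19$ and $37$ come from, and to be exactly the place where the polynomial (rather than exponential) bound is won: bounded-treewidth graphs have enough separator/product structure for such a colouring to be produced in a single pass, instead of by the recursion that is responsible for the exponential clustering in the work of Esperet and Joret.

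Granting the building block, fix $s=\mathrm{poly}(\Delta)$, partition $\{1,\dots,n\}$ into consecutive blocks $I_1,I_2,\dots$ of $s$ layers, and set $H_j:=G\big[\bigcup_{i\in I_j}V_i\big]$; then each $H_j$ has treewidth less than $sw$ and maximum degree at most $\Delta$, and every edge of $G$ either lies inside some $H_j$ or joins the last layer of $H_j$ to the first layer of $H_{j+1}$. A clean way to organise the three colours is to colour the odd-indexed blocks using only colours $\{1,2\}$ and the even-indexed blocks using only colours $\{1,3\}$: then colours $2$ and $3$ are automatically confined to a single block apiece, so every monochromatic component in those colours is already of size $\mathrm{poly}(sw,\Delta)$, and the only remaining threat is a colour-$1$ monochromatic component that runs through many blocks. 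To prevent this, sweep through the blocks in order and colour each $H_{j+1}$ using the building block so that no colour-$1$ vertex of its interface layer with $H_j$ is adjacent, across that interface, to a colour-$1$ vertex of $H_j$. Combining the internal bound $\mathrm{poly}(sw,\Delta)$ from the building block with $s=\mathrm{poly}(\Delta)$ then gives clustering $O(w^{19}\Delta^{37})$ overall.

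The heart of the matter, and the step I expect to be the real obstacle, is enforcing that interface condition with only three colours available. One cannot simply forbid colour $1$ on an entire interface layer, since a layer may be arbitrarily large and a monochromatic layer would itself be a huge monochromatic component; so colour $1$ must be forbidden only on those interface vertices that actually see colour $1$ on the far side, and this partial restriction must then be extended to a clustered colouring of the whole next block, while the analogous restriction at that block's other interface also holds. Guaranteeing that such a constrained extension always exists with bounded clustering is precisely what forces the boundary-controlled strengthening of the bounded-treewidth building block, and it is also where the bounded maximum degree is indispensable: without a degree bound, even a single layer need not admit a $3$-colouring with bounded clustering, so there would be no hope of absorbing these interface constraints at bounded cost.
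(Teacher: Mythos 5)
Your proposal is a sketch with an explicitly acknowledged hole at its centre: you invoke a ``boundary-controlled'' strengthening of a clustered-colouring lemma for bounded-treewidth bounded-degree graphs (a precoloring-extension version in which one layer of the tree-decomposition is constrained to agree with a prescribed colouring and another is constrained to a prescribed simple form), but you neither prove it, cite it, nor give a convincing reason it should hold with polynomial clustering. This is not a routine strengthening of the Alon--Ding--Oporowski--Vertigan result (\cref{ClusteringDegreeTreewidth}): with only two colours available within a block and an adversarially chosen set of interface vertices forbidden from colour $1$, those forbidden vertices are forced onto colour $2$ (or $3$), and you would still need to argue that the resulting constrained $2$-colouring of the whole block has bounded monochromatic components. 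You flag this yourself as ``the heart of the matter, and the step I expect to be the real obstacle,'' and indeed it is; as written the proposal reduces \cref{ltwdegree} to an unproved (and non-obvious) lemma.

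The paper's proof sidesteps the need for any boundary-controlled or precoloring-extension lemma. It does not group layers into blocks; instead it uses $U_j=\bigcup_i V_{3i+j}$ for $j\in[3]$ and the cyclic colour pattern $\{1,2\},\{2,3\},\{3,1\}$, so that each monochromatic component lives in exactly two consecutive layers. The growth of components across a layer boundary is controlled not by constraining the next colouring but by modifying the graph before colouring it: \cref{enlarge} adds, for each monochromatic component $C$ of the already-coloured layer, a clique of virtual edges on the neighbours of $C$ in the next layer, and shows this preserves bounded treewidth and bounded degree (with controlled blow-up). Then the plain $2$-colour \cref{ClusteringDegreeTreewidth} is applied to the modified graph, and the virtual edges guarantee that a monochromatic component spanning the boundary projects onto a single monochromatic component of the modified graph. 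That trick is the ingredient your outline is missing; without it or a proof of your building block, the argument does not close.
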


\cref{GenusDegree} is an immediate corollary of \cref{ltwdegree} and the above-mentioned bound of \citet{DMW17}. The proof of \cref{ltwdegree} (presented in \cref{BoundedDegree}) is simpler than the proof in \cite{EJ14}, avoiding many technicalities that arise when dealing with graph embeddings. This proof highlights the utility of layered treewidth as a general tool. Moreover, it leads to more general results, as we now explain. 

\medskip
We now give three examples of graph classes with bounded layered treewidth, for which \cref{ltwdegree} give interesting results. 

\paragraph{\boldmath $(g,k)$-Planar Graphs:} A graph is \mathdef{$(g,k)$}{planar} if it can be drawn in a surface of Euler genus at most $g$ with at most $k$ crossings on each edge (assuming no three edges cross at a single point). Such graphs can contain arbitrarily large complete graph minors, even in the $g=0$ and $k=1$ case \cite{DEW17}, so the above-mentioned result of \citet{LO17} is not applicable. On the other hand, \citet{DEW17} proved that every $(g,k)$-planar graph has layered treewidth at most $(4g + 6)(k + 1)$. \cref{ltwdegree} then implies:

\begin{corollary}
\label{gkPlanarDegree}
For all $g,k,\Delta \in \mathbb{N}$, every $(g,k)$-planar graph with maximum degree at most $\Delta$ is 3-colorable with clustering $O(g^{19}k^{19}\Delta^{37})$. 
\end{corollary}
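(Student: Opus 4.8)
The plan is to derive \cref{gkPlanarDegree} directly from \cref{ltwdegree} together with the bound on the layered treewidth of $(g,k)$-planar graphs due to \citet{DEW17}. Let $G$ be a $(g,k)$-planar graph with maximum degree at most $\Delta$. First I would invoke the theorem of \citet{DEW17} stating that every $(g,k)$-planar graph has layered treewidth at most $(4g+6)(k+1)$, and set $w:=(4g+6)(k+1)$. Applying \cref{ltwdegree} to $G$ with this value of $w$ then yields a $3$-colouring of $G$ with clustering $O(w^{19}\Delta^{37})$.

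The only remaining step is bookkeeping: to rewrite $O(w^{19}\Delta^{37})$ purely in terms of $g,k,\Delta$. Since $g,k\in\mathbb{N}$, we have $4g+6\leq 10g$ and $k+1\leq 2k$, so $w\leq 20gk$ and hence $w^{19}\leq 20^{19}g^{19}k^{19}$. Folding the fixed constant $20^{19}$ (and the implicit constant from \cref{ltwdegree}) into the $O(\cdot)$ notation gives clustering $O(g^{19}k^{19}\Delta^{37})$, which is exactly the claimed bound.

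I do not expect any genuine obstacle: the corollary is an immediate consequence of \cref{ltwdegree} once the layered-treewidth estimate for $(g,k)$-planar graphs is quoted, and the work amounts only to absorbing fixed constants into the asymptotic notation. All of the real content sits in the proof of \cref{ltwdegree} itself, which is carried out separately in \cref{BoundedDegree}.
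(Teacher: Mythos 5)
Your proposal is correct and coincides exactly with the paper's own argument: quote the bound of \citet{DEW17} that $(g,k)$-planar graphs have layered treewidth at most $(4g+6)(k+1)$, apply \cref{ltwdegree}, and absorb constants. The bookkeeping step ($4g+6\leq 10g$, $k+1\leq 2k$ for $g,k\in\mathbb{N}$) is fine.
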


%%%%%%%%%%%%%%%%%%
\paragraph{Map Graphs:} Map graphs are defined as follows. Start with a graph $G_0$ embedded in a surface of Euler genus $g$, with each face labelled a ``nation'' or a ``lake'', where each vertex of $G_0$ is incident with at most $d$ nations. Let $G$ be the graph whose vertices are the nations of $G_0$, where two vertices are adjacent in $G$ if the corresponding faces in $G_0$ share a vertex. Then $G$ is called a \mathdef{$(g,d)$}{map graph}. A $(0,d)$-map graph is called a \emph{(plane)} \mathdef{$d$}{map graph}; such graphs have been extensively studied \citep{FLS-SODA12,Chen07,DFHT05,CGP02,Chen01}. The $(g,3)$-map graphs are precisely the graphs of Euler genus at most $g$ (see \citep{CGP02,DEW17}). 
So $(g,d)$-map graphs provide a natural generalization of graphs embedded in a surface that allows for arbitrarily large cliques even in the $g=0$ case (since if a vertex of $G_0$ is incident with $d$ nations then $G$ contains $K_d$). 
\citet{DEW17} proved that every $(g,d)$-map graph has layered treewidth at most $(2g+3)(2d+1)$. Thus \cref{ltwdegree} implies:

\begin{corollary}
\label{MapGraphBoundedDegree}
For all $g,d,\Delta \in \mathbb{N}$, every $(g,d)$-map graph with maximum degree at most $\Delta$ is 3-colorable with clustering $O(g^{19}d^{19}\Delta^{37})$.
\end{corollary}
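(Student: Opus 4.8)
The plan is to obtain this as an immediate corollary of \cref{ltwdegree} combined with the structural bound on the layered treewidth of map graphs. First I would invoke the result of \citet{DEW17} quoted above: every $(g,d)$-map graph $G$ has layered treewidth at most $w := (2g+3)(2d+1)$. Then I would apply \cref{ltwdegree} to $G$ with this value of $w$ and the hypothesised maximum degree bound $\Delta$. \cref{ltwdegree} directly gives that $G$ is 3-colorable with clustering $O(w^{19}\Delta^{37})$.

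It then remains only to rewrite this bound in the stated form. Since $g,d\in\mathbb{N}$ we have $2g+3\leq 5g$ and $2d+1\leq 3d$, so $w\leq 15gd$ and hence $w^{19}=O(g^{19}d^{19})$. Substituting into $O(w^{19}\Delta^{37})$ yields clustering $O(g^{19}d^{19}\Delta^{37})$, which is exactly the claimed bound.

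I do not expect any real obstacle: the entire difficulty of the statement is absorbed into \cref{ltwdegree} (whose proof appears in \cref{BoundedDegree}) and into the cited map-graph result of \citet{DEW17}. The one place calling for (very mild) care is the simplification of the explicit constant $(2g+3)(2d+1)$ down to $O(g^{19}d^{19})$, which is handled by the crude inequalities above that are valid for all positive integers $g$ and $d$. The analogous corollaries for $(g,k)$-planar graphs (\cref{gkPlanarDegree}) and for graphs of bounded Euler genus (\cref{GenusDegree}) follow in exactly the same way from \cref{ltwdegree} and the corresponding layered treewidth bounds.
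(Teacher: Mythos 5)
Your proposal is correct and follows exactly the paper's route: it invokes the $(2g+3)(2d+1)$ layered treewidth bound of \citet{DEW17} for $(g,d)$-map graphs and then applies \cref{ltwdegree}. The only addition is the explicit simplification $(2g+3)(2d+1)\leq 15gd$, which the paper leaves implicit.
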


%%%%%%%%%%%%%%%%%%%
\paragraph{String Graphs:} A \defn{string graph} is the intersection graph of a set of curves in the plane with no three curves meeting at a single point \cite{PachToth-DCG02,SS-JCSS04,SSS-JCSS03,Krat-JCTB91,FP10,FP14}. For an integer $k\geq 2$, if each curve is in at most $k$ intersections with other curves, then the corresponding string graph is called a \mathdef{$k$}{string graph}. Note that two curves can intersect multiple times and contribute more than one intersection to each curve. A \mathdef{$(g,k)$}{string} graph is defined analogously for curves on a surface of Euler genus at most $g$. \citet{DJMNW18} proved that every $(g,k)$-string graph has layered treewidth at most $2(k-1)(2g+3)$. By definition, the maximum degree of a $(g,k)$-string graph is at most $k$. Thus \cref{ltwdegree} implies:

\begin{corollary}
\label{StringColor}
For all integers $g\geq 0$ and $k\geq 2$, there exists $\eta\in\mathbb{N}$ such that every $(g,k)$-string graph is $3$-colorable with clustering $O(g^{19}k^{56})$.
\end{corollary}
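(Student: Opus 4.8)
The plan is to obtain \cref{StringColor} as an immediate consequence of \cref{ltwdegree} together with the two structural facts about $(g,k)$-string graphs recorded above. Fix integers $g\geq 0$ and $k\geq 2$, and let $G$ be a $(g,k)$-string graph. By the theorem of \citet{DJMNW18}, $G$ has layered treewidth at most $w:=2(k-1)(2g+3)$. Moreover, by definition each curve in the representation of $G$ meets at most $k$ other curves, so the maximum degree of $G$ is at most $k$. Hence $G$ satisfies the hypotheses of \cref{ltwdegree} with layered treewidth at most $w$ and maximum degree at most $\Delta:=k$.

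First I would apply \cref{ltwdegree} to conclude that $G$ is $3$-colorable with clustering $O(w^{19}\Delta^{37})$, with an absolute constant hidden in the $O(\cdot)$. Then I would simplify this expression. For $g\geq 1$ we have $w=2(k-1)(2g+3)\leq 10kg$, so that $w^{19}\Delta^{37}=O\bigl((kg)^{19}k^{37}\bigr)=O(g^{19}k^{56})$, using $19+37=56$; and for $g=0$ we have $w=6(k-1)=O(k)$, which gives clustering $O(k^{56})$. In either case $G$ is $3$-colorable with clustering $O(g^{19}k^{56})$, which is exactly the desired conclusion, the required $\eta$ being this bound for the fixed values of $g$ and $k$.

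There is essentially no obstacle here beyond bookkeeping: all of the difficulty has already been absorbed into \cref{ltwdegree}, the main theorem of the paper. The only points worth double-checking are that the maximum degree of a $k$-string graph is controlled by $k$ alone — the genus $g$ enters only through the layered treewidth bound of \citet{DJMNW18}, and not through the degree, which is precisely why $\Delta$ can be taken independent of $g$ — and that the exponent arithmetic, $19+37=56$, reproduces the stated clustering.
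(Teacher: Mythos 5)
Your proof is correct and follows exactly the route the paper intends: combine the layered treewidth bound $w=2(k-1)(2g+3)$ of \citet{DJMNW18} with the degree bound $\Delta\leq k$, and plug into \cref{ltwdegree}, with the exponent $56=19+37$ coming out as you computed. The only extra remark you make (the $g=0$ case) is a reasonable point of care, but it is merely a matter of how the $O(\cdot)$ notation is read, not a gap in the argument.
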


%%%%%%%%%%%%%%%%%%%%%%%%%
\section{The Proof}
\label{BoundedDegree}

This section proves \cref{ltwdegree}, which says that graphs of bounded layered treewidth and bounded maximum degree are  3-colorable with bounded clustering. We need the following analogous result by \citet{ADOV03} for bounded treewidth graphs. 

\begin{lemma}[\citep{ADOV03}]
\label{ClusteringDegreeTreewidth}
There is a function $f:\mathbb{N}\times\mathbb{N}\to\mathbb{N}$ such that every graph with treewidth $w$  and maximum degree $\Delta$ is 2-colorable with clustering $f(w,\Delta) \in O(w\Delta)$. 
\end{lemma}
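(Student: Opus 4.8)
The plan is to reduce this to a bound on the \emph{tree-partition-width} of bounded-degree, bounded-treewidth graphs. Recall that a \emph{tree-partition} of a graph $G$ is a tree $T$ together with a partition $(B_x:x\in V(T))$ of $V(G)$ such that for every edge $uv\in E(G)$, either $u$ and $v$ lie in a common part, or $u\in B_x$ and $v\in B_y$ for some edge $xy\in E(T)$; its width is $\max_x\abs{B_x}$. A classical theorem of Ding and Oporowski (with the constant later improved) states that every graph with treewidth at most $w$ and maximum degree at most $\Delta$ admits a tree-partition of width $O(w\Delta)$. I would take this as the main external ingredient.

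Given such a tree-partition $(T,(B_x))$ of width $c=O(w\Delta)$, I would root $T$ at an arbitrary node and $2$-color each vertex $v$ with the depth of $x(v)$ in $T$ taken modulo $2$, where $x(v)$ denotes the unique node whose part contains $v$. The key observation is that every monochromatic component is contained in a single part: if $uv$ is a monochromatic edge then, by the tree-partition axiom, either $x(u)=x(v)$ or $x(u)x(v)\in E(T)$; but in the latter case $x(u)$ and $x(v)$ have depths differing by exactly $1$, so $u$ and $v$ receive different colors, a contradiction. Hence every monochromatic edge has both ends in one part, and, by connectivity of components, every monochromatic component lies inside a single part $B_x$, giving clustering at most $c=O(w\Delta)$. (The degree hypothesis is genuinely needed here: for instance the graph obtained from a path on $n$ vertices by adding one dominating vertex has treewidth $2$ but, by an easy pigeonhole argument, admits no $2$-coloring whose clustering is bounded independently of $n$.)

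I expect the real work to be the tree-partition-width bound, since the coloring step above is immediate once it is in hand; this is also precisely the step where the maximum degree hypothesis enters. For a self-contained treatment one would argue as Ding and Oporowski do: starting from a rooted tree-decomposition of width $w$, one greedily amalgamates the bags along the decomposition tree into ``super-bags'', using that each bag has at most $w+1$ vertices of degree at most $\Delta$ to control how far the vertices adjacent to a super-bag can spread through the decomposition; a careful choice of this amalgamation produces parts of size $O(w\Delta)$ that are naturally indexed by a tree. The combinatorial bookkeeping in that construction is the one genuinely delicate point; the rest of the argument is routine.
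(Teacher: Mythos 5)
Your argument is correct, and since the paper quotes \cref{ClusteringDegreeTreewidth} from \citep{ADOV03} without proof, the right comparison is with that reference: the proof there is exactly your reduction, namely the Ding--Oporowski tree-partition-width bound $O(w\Delta)$ followed by 2-coloring the parts by depth parity so that every monochromatic component lies in a single part. So you have reproduced the standard proof of the cited result; nothing is missing.
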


For a graph $G$ with bounded maximum degree and bounded layered treewidth, if $(V_1,\dots,V_n)$ is the corresponding layering of $G$, then 
\cref{ClusteringDegreeTreewidth} is applicable to $G[V_i]$, which has bounded treewidth. 
The idea of the proof of \cref{ltwdegree} is to 
use colors 1 and 2 for all layers $V_i$ with $i\equiv 1\pmod{3}$, 
use colors 2 and 3 for all layers $V_i$ with $i\equiv 2\pmod{3}$, 
and
use colors 3 and 1 for all layers $V_i$ with $i\equiv 3\pmod{3}$. 
Then each monochromatic component is contained within two consecutive layers. 
The key to the proof is to control the growth of monochromatic components between consecutive layers. The next lemma is useful for this purpose.

\begin{lemma} \label{enlarge}
Let $w,\Delta,d,k,h\in\mathbb{N}$. 
Let $G$ be a graph with maximum degree at most $\Delta$.
Let $(T,\X)$ be a tree-decomposition of $G$ with width at most $w$, where $\X=(X_t:t\in V(T))$. 
For $i \geq 1$, let $Y_i$ be a subset of $V(T)$, $T_i$ a subtree of $T$ containing $Y_i$, and $E_i$ a set of pairs of vertices in $\bigcup_{x \in Y_i}X_x$ with $\lvert E_i \rvert \leq k$.
Let $G'$ be the graph with $V(G')=V(G)$ and $E(G')=E(G) \cup \bigcup_{i \geq 1} E_i$.
If every vertex of $G$ appears in at most $d$ pairs in $\bigcup_{i \geq 1} E_i$, and every vertex of $T$ is contained in at most $h$ members of $\{T_1,T_2,\dots\}$, then $G'$ has maximum degree at most $\Delta+d$ and has a tree-decomposition $(T,\X')$ of width at most $w+2hk$ with $X'_t \supseteq X_t$ for every $t \in V(T)$, where $\X'=(X'_t: t \in V(T))$.
\end{lemma}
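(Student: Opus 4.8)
The plan is to build the desired tree-decomposition $(T,\X')$ of $G'$ by taking the given tree-decomposition $(T,\X)$ and enlarging each bag $X_t$ to cover the newly added edges. The degree bound is immediate: a vertex $v$ of $G$ has degree at most $\Delta$ in $G$, and by hypothesis $v$ lies in at most $d$ pairs of $\bigcup_{i\geq 1}E_i$, so its degree in $G'$ is at most $\Delta+d$. The real content is the width bound, so I would focus there.

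For each $i\geq 1$ and each pair $e=\{u,v\}\in E_i$, I would like to ``route'' the endpoints $u$ and $v$ along the subtree $T_i$. Since $E_i$ consists of pairs of vertices in $\bigcup_{x\in Y_i}X_x$ and $Y_i\subseteq V(T_i)$ with $T_i$ connected, I would pick for each such endpoint $u$ a node $x(u)\in Y_i$ with $u\in X_{x(u)}$; then the set of nodes of $T$ whose bag $X_t$ already contains $u$, together with the path in $T_i$ joining $x(u)$ to this set (or to $T_i$ itself), forms a connected subtree. Concretely, I would define, for each $i$,
\[
Z_i \;:=\; \bigcup_{\{u,v\}\in E_i}\big(\,\pi_i(u)\cup\pi_i(v)\,\big),
\]
where $\pi_i(u)$ is the path in $T_i$ from $x(u)$ to the (nonempty) subtree $\{t\in V(T):u\in X_t\}$, and then set
\[
X'_t \;:=\; X_t \;\cup\; \bigcup_{i\,:\,t\in Z_i}\;\{\,u : u \text{ is an endpoint of some pair in } E_i\,\}.
\]
Actually it is cleaner to add, for $t\in Z_i$, the whole endpoint-set of $E_i$, which has at most $2k$ vertices since $\lvert E_i\rvert\leq k$. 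I then need to check three things: (i) $(T,\X')$ is still a valid tree-decomposition of $G$ with the subtree property preserved; (ii) every new edge in $E_i$ is covered by some bag $X'_t$; (iii) the width bound $\lvert X'_t\rvert\leq w+1+2hk$, i.e.\ $\lvert X'_t\rvert-1\leq w+2hk$.

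For (i), the key point is that for each endpoint $u$, the set of nodes $t$ with $u\in X'_t$ is the union of the original subtree $\{t:u\in X_t\}$ with all the paths $\pi_i(u)$ over those $i$ for which $u$ appears in $E_i$; each such path meets the original subtree (by construction of $\pi_i(u)$) and lies in the connected set $T_i$, so the union is connected — a subtree of $T$. Edges of $E(G)$ are still covered since bags only grew. For (ii), given $\{u,v\}\in E_i$, both $x(u)$ and $x(v)$ lie in $Y_i\subseteq V(T_i)$, hence the path $P$ in $T_i$ between them is contained in $Z_i$ (it is covered by $\pi_i(u)\cup\pi_i(v)$ together with, if needed, a sub-path — here I'd be slightly careful and, to be safe, also throw the connecting path between $x(u)$ and $x(v)$ in $T_i$ into $Z_i$); then at any node $t$ on $P$ we have $t\in Z_i$, so $X'_t$ contains the endpoint-set of $E_i$, in particular $\{u,v\}\subseteq X'_t$. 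For (iii), fix $t\in V(T)$. By hypothesis $t$ lies in at most $h$ of the subtrees $T_i$, hence in at most $h$ of the sets $Z_i$ (since $Z_i\subseteq V(T_i)$); for each such $i$ we added at most $2k$ vertices, so $\lvert X'_t\rvert \leq \lvert X_t\rvert + 2hk \leq (w+1)+2hk$, giving width at most $w+2hk$, as required.

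The main obstacle I anticipate is bookkeeping in the subtree/connectivity argument of step (i), specifically making sure that the ``routing'' paths $\pi_i(u)$ are chosen so that (a) they stay inside $T_i$ (which is needed for the bound $Z_i\subseteq V(T_i)$, hence for step (iii)), and (b) they attach to the original subtree $\{t:u\in X_t\}$ (which is needed so the enlarged index set stays connected). Both are achievable because $Y_i\subseteq V(T_i)$ and $T$ (so $T_i$) is a tree, so there is a unique path realizing each such connection; but one must verify that the chosen node $x(u)\in Y_i$ with $u\in X_{x(u)}$ indeed has $x(u)$ in the original subtree of $u$ — which is immediate, since $u\in X_{x(u)}$ means $x(u)$ belongs to $\{t:u\in X_t\}$. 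So in fact $\pi_i(u)$ can be taken to be the trivial one-node path at $x(u)$, and $Z_i$ reduces to the union over pairs $\{u,v\}\in E_i$ of the path in $T_i$ from $x(u)$ to $x(v)$; this is the clean version of the construction and removes most of the subtlety. The remaining routine verifications (validity, coverage of new edges, the width count) then go through as sketched above.
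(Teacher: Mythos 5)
Your construction is right in spirit and matches the paper in most respects (degree bound, coverage of the new edges, and the width count via ``at most $h$ subtrees $T_i$ through $t$, each contributing at most $2k$ new vertices''), but step (i) --- the verification of the subtree property for $(T,\X')$ --- contains a genuine error, and it is exactly where your ``economy'' of adding vertices only along paths inside $T_i$ backfires.

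Your characterization of $\{t : u \in X'_t\}$ is wrong. With $X'_t := X_t \cup \bigcup_{i:\,t\in Z_i}\{\text{endpoints of }E_i\}$ and $Z_i$ defined as the union of the paths in $T_i$ between $x(u')$ and $x(v')$ over all pairs $\{u',v'\}\in E_i$, the set of nodes whose new bag contains $u$ is
\[
\{t : u\in X'_t\} \;=\; \{t : u\in X_t\} \;\cup\; \bigcup_{\{i:\,u\text{ is an endpoint of some pair in }E_i\}} Z_i,
\]
not $\{t:u\in X_t\}\cup\bigcup_i\pi_i(u)$ as you claim. The extra part is the whole of $Z_i$, which also contains the routing paths for pairs of $E_i$ that do not involve $u$; these paths need not meet the original subtree $\{t:u\in X_t\}$ and need not even meet $\pi_i(u)$, so the union can be disconnected. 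Concretely: let $T$ be the path $t_1t_2t_3t_4t_5$, let $T_i=T$, $Y_i=\{t_1,t_5\}$, $u,v\in X_{t_1}$ only, $a,b\in X_{t_5}$ only, and $E_i=\{\{u,v\},\{a,b\}\}$. Then $Z_i=\{t_1,t_5\}$, so $u\in X'_{t_1}$ and $u\in X'_{t_5}$ but $u\notin X'_{t_2},X'_{t_3},X'_{t_4}$, and the subtree property fails. (Enlarging $Z_i$ by also adding the path from $x(u')$ to $x(v')$, as you do, does not help: in the example that path is $t_1\cdots t_5$, which repairs $u$, but one can build slightly larger examples with three pairs where it still breaks.)

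The fix is the one the paper uses: replace the set $Z_i$ of tree-nodes by \emph{all} of $V(T_i)$, i.e.\ set $X'_t := X_t \cup \bigcup_{\{i:\,t\in V(T_i)\}}\{\text{endpoints of }E_i\}$. Then $\{t:u\in X'_t\}=\{t:u\in X_t\}\cup\bigcup_{\{i:\,u\text{ endpoint of }E_i\}}V(T_i)$; each $V(T_i)$ is connected, and since $u\in X_{x(u)}$ with $x(u)\in Y_i\subseteq V(T_i)$, each $V(T_i)$ meets the original subtree of $u$, so the union is connected. Coverage of the new edges is immediate since $Y_i\subseteq V(T_i)$ is nonempty whenever $E_i$ is nonempty, and every bag along $T_i$ contains both endpoints of every pair of $E_i$. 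Your width argument then goes through verbatim, since $t\in V(T_i)$ for at most $h$ values of $i$, each contributing at most $2k$ vertices.
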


\begin{proof}
Since every vertex of $G$ appears in at most $d$ pairs in $\bigcup_{i \geq 1} E_i$, $G'$ has maximum degree at most $\Delta+d$.
For every $i \geq 1$, let $Z_i$ be the set of the vertices appearing in some pair of $E_i$.
Note that $Z_i \subseteq \bigcup_{x \in Y_i}X_x$ and $\lvert Z_i \rvert \leq 2\lvert E_i \rvert \leq 2k$.
For every $t \in V(T)$, let $X_t' := X_t \cup \bigcup_{\{i: t \in V(T_i)\}}Z_i$.
Let $\X' := (X_t': t \in V(T))$.

We claim that $(T,\X')$ is a tree-decomposition of $G'$.
It is clear that $\bigcup_{t \in V(T)} X'_t \supseteq V(G')$.
For each $i \in\mathbb{N}$, for every $t \in V(T_i)$, since $X_t' \supseteq Z_i$, $X_t'$ contains both ends of each edge in $E_i$.
For each $v \in V(G')$, 
$$\{t \in V(T): v \in X_t'\}=\{t \in V(T): v \in X_t\} \cup \bigcup_{\{i: v \in Z_i\}}V(T_i).$$
Note that for every $v \in V(G)$ and $i \geq 1$, if $v \in Z_i$ then $v\in X_t$ for some  $t \in Y_i \subseteq V(T_i)$.
Hence $\{t: v \in X_t'\}$ induces a subtree of $T$.
This proves that $(T,\X')$ is a tree-decomposition of $G'$.

Since for every $t \in V(T)$, $|X'_t| \leq |X_t| + \sum_{\{i:t\in V(T_i)\}}|Z_i|  \leq w+1+2hk$, the width of $(T,\X')$ is at most $w+2hk$.
Moreover, $X'_t \supseteq X_t$ for every $t \in V(T)$ by definition.
This proves the lemma.
\end{proof}

We now prove \cref{ltwdegree}. 

\begin{theorem}
Let $\Delta, w \in \mathbb{N}$.
Then every graph $G$ with maximum degree at most $\Delta$ and with layered treewidth at most $w$ is 3-colorable with clustering  $g(w,\Delta)$, for some function $g(w,\Delta) \in O(w^{19}\Delta^{37})$.
\end{theorem}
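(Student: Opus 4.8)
The plan is to fix a layering $(V_1,\dots,V_n)$ of $G$ together with a tree-decomposition $(T,\X)$ of $G$ with $\lvert X_x\cap V_i\rvert\le w$ for every bag $X_x$ and every layer $V_i$, as supplied by the hypothesis. Restricting $(T,\X)$ to $V_i$, and to $V_i\cup V_{i+1}$, shows that $G[V_i]$ has treewidth at most $w-1$ and $G[V_i\cup V_{i+1}]$ has treewidth at most $2w-1$; both also have maximum degree at most $\Delta$. This is the only consequence of the hypothesis I would use: it lets \cref{ClusteringDegreeTreewidth} apply to such subgraphs (yielding $2$-colourings with clustering $O(w\Delta)$), and it lets \cref{enlarge} augment $G$ with virtual edges while keeping both the maximum degree and the width of the tree-decomposition bounded in terms of $w$ and $\Delta$.

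\textbf{Reduction to consecutive layers.}
Following the scheme sketched before the statement, I would use colours $\{1,2\}$ on layers with $i\equiv 1$, colours $\{2,3\}$ on layers with $i\equiv 2$, and colours $\{3,1\}$ on layers with $i\equiv 0\pmod 3$. Equivalently, partition each layer as $V_i=L_i\sqcup R_i$ and give $v\in V_i$ the colour $(i-1)\bmod 3$ if $v\in L_i$ and the colour $i\bmod 3$ if $v\in R_i$; since $(i-1)\bmod 3$, $i\bmod 3$, $(i+1)\bmod 3$ are pairwise distinct, two adjacent vertices in layers $i$ and $j\in\{i,i+1\}$ receive the same colour only if both lie in a common ``band'' $R_i\cup L_{i+1}$. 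Hence every monochromatic component is a connected component of some $G[R_i\cup L_{i+1}]$ (with $R_0=L_{n+1}=\emptyset$), and it suffices to choose the partitions so that each such component has at most $g(w,\Delta)$ vertices. Crucially $G[R_i\cup L_{i+1}]\subseteq G[V_i\cup V_{i+1}]$, which has treewidth at most $2w-1$ and maximum degree at most $\Delta$.

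\textbf{Building the partitions.}
The construction would sweep $i=1,2,\dots,n$: at step $i$, with $R_{i-1}$ already committed and $G[R_{i-1}]$ having only small components, I want to commit to $L_i$ (hence to $R_i:=V_i\setminus L_i$) so that $G[R_{i-1}\cup L_i]$ has small components and $G[R_i]$ again has small components. The mechanism is to add to $G[V_{i-1}\cup V_i]$, via \cref{enlarge}, virtual edges turning each component of $G[R_{i-1}]$ into a clique — each such component is small and spans a subtree of $T$, so \cref{enlarge} increases the degree and the width only by a bounded amount — then apply \cref{ClusteringDegreeTreewidth} to $2$-colour the augmented graph so that each old component of $G[R_{i-1}]$ is monochromatic, and take $L_i$ to be the $V_i$-part of one colour class. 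Then $G[R_{i-1}\cup L_i]$ lies inside one colour class up to small attachments, so it has small components.

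\textbf{The main obstacle.}
The real difficulty — and the reason the clustering is a large polynomial rather than $O(w\Delta)$ — is that a single such sweep does not keep component sizes bounded: the bound for $R_i$ coming out of step $i$ depends on the bound for $R_{i-1}$ going in, so a naive induction multiplies the bound by $\Theta(w\Delta)$ at each of the $n$ layers. To defeat this compounding, the bound produced for each $R_i$ must depend only on $w$ and $\Delta$. The plan is to run a \emph{constant} number of correction passes over the whole layering: an initial pass $2$-colours each $G[V_i]$ by \cref{ClusteringDegreeTreewidth}, and each later pass re-colours by applying \cref{ClusteringDegreeTreewidth} to graphs obtained from the $G[V_i\cup V_{i+1}]$ by adding, via \cref{enlarge}, virtual edges encoding the monochromatic components of the previous pass; each augmentation multiplies the degree and the width only by a factor polynomial in $w$ and $\Delta$. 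The technical heart is to show that after a bounded number of passes the $2$-colourings of the overlapping pairs $\{i-1,i\}$ and $\{i,i+1\}$ become mutually consistent on $V_i$, so that they assemble into partitions with a uniform component bound; tracking the polynomial blow-up through this fixed number of passes is what produces the exponents $19$ and $37$.
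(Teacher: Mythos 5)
Your setup is right, and you correctly identify both the key ingredients (the mod-$3$ colour scheme, \cref{ClusteringDegreeTreewidth}, and \cref{enlarge} for adding virtual edges that encode connectivity through small components) and the central danger (a layer-by-layer sweep compounds the clustering bound by a $\Theta(w\Delta)$ factor per layer). But your proposed fix --- ``a constant number of correction passes,'' ending with an unspecified argument that overlapping $2$-colourings of $V_{i-1}\cup V_i$ and $V_i\cup V_{i+1}$ ``become mutually consistent'' --- is not what makes the argument work, and you give no reason why such passes should converge. That is where the gap lies.

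The actual mechanism is more direct and sidesteps consistency entirely. Let $U_j$ be the union of the layers $\equiv j\pmod 3$. Because different layers within a fixed $U_j$ are pairwise non-adjacent, there is no propagation inside $U_j$: you can $2$-colour all of $U_j$ in one shot with \cref{ClusteringDegreeTreewidth}. So the proof colours $U_1$, then $U_2$, then $U_3$ --- three global applications, not $n$ local sweeps. The coupling across residue classes is handled by looking \emph{backwards only}: when colouring $U_2$ you first add, via \cref{enlarge}, virtual edges on the $V_{3i+2}$-neighbourhoods of the colour-$2$ components produced by the $U_1$ colouring, so that the resulting $c_2$-monochromatic components of the augmented $G_2$ already account for the way a colour-$2$ component of the final colouring can cross into $U_1$; and when colouring $U_3$ you similarly add virtual edges for \emph{both} the colour-$1$ components of $U_1$ (adjacent via $V_{3i+4}$) and the colour-$3$ components of $U_2$ (adjacent via $V_{3i+2}$). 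Each stage inflates the degree by a factor $O(f_j\Delta)$ and the layer-restricted width by $O(w f_j^2\Delta^2)$, but since there are exactly three stages the final clustering is a fixed polynomial in $w,\Delta$ (the bookkeeping gives $O(w^{19}\Delta^{37})$). In short: what you are missing is that the bound depends on the \emph{previous residue class}, not on the previous layer, so the recursion has depth three rather than depth $n$, and no iteration or consistency argument is ever needed.

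One smaller point: your ``glue components of $G[R_{i-1}]$ into cliques and then $2$-colour so they stay monochromatic'' step is not guaranteed by \cref{ClusteringDegreeTreewidth}. Turning a component into a clique forces it to be monochromatic in any \emph{proper} colouring, but \cref{ClusteringDegreeTreewidth} produces an improper colouring with bounded clustering, so nothing stops it from splitting your clique. The paper's use of \cref{enlarge} is different: the virtual edges go between the $V_{3i+2}$-\emph{neighbours} of a $U_1$-component, so that those neighbours cannot all receive colour $2$ without forming a single $c_2$-monochromatic component in $G_2$; the $U_1$-component itself is not made a clique and is not forced to be monochromatic.
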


\begin{proof}
Let $f$ be the function from \cref{ClusteringDegreeTreewidth}. Define
\begin{align*}
f_1 & := f(w,\Delta)  & &  \in O(w\Delta)\\
\Delta_2 & :=\Delta+ f_1\Delta^2 & & \in O(w\Delta^3)\\
w_2 & := w+2(w+1)f_1^2\Delta^2 & & \in O(w^3\Delta^4),\\
f_2 & := f(w_2,\Delta_2) & & \in O(w^4\Delta^7)\\
\Delta_3 & := \Delta+f_2\Delta^2 &  & \in O( w^4\Delta^{9})\\
w_3 & := w+4(w_2+1)f_2^2\Delta^2 & & \in O( w^{11} \Delta^{20})\\ 
f_3 & := f(w_3,\Delta_3) & & \in O( w^{15} \Delta^{29} )\\
g(w,\Delta) & :=(1+f_2\Delta)f_3 & & \in O( w^{19} \Delta^{37}  ).
\end{align*}

Let $G$ be a graph of maximum degree at most $\Delta$ and layered treewidth at most $w$.
Let $(T,\X)$ and $(V_i: i \geq 1)$ be a tree-decomposition of $G$ and a layering of $G$ such that $\lvert X_t \cap V_i \rvert \leq w$ for every $t \in V(T)$ and $i \geq 1$, where $\X=(X_t: t \in V(T))$.
For $j \in [3]$, let $U_j = \bigcup_{i=0}^\infty V_{3i+j}$.

By \cref{ClusteringDegreeTreewidth}, there exists a coloring $c_1: U_1 \rightarrow \{1,2\}$ such that every monochromatic component of $G[U_1]$ contains at most $f_1$ vertices.
For each $i \in\mathbb{N}_0$, let $\C_i$ be the set of $c_1$-monochromatic components of $G[U_1]$ contained in $V_{3i+1}$ with color 2.
For each $i \in\mathbb{N}_0$ and $C \in \C_i$, define the following:
	\begin{itemize}
		\item Let $Y_{i,C}$  be a minimal subset of $V(T)$ such that for every edge $e$ of $G$ between $V(C)$ and $N_G(V(C)) \cap V_{3i+2}$, there exists a node $t \in Y_{i,C}$ such that both ends of $e$ belong to $X_t$.
		\item Let $E_{i,C}$  be the set of all pairs of distinct vertices in $N_G(V(C)) \cap V_{3i+2}$.
		\item Let $T_{i,C}$  be the subtree of $T$ induced by $\{t \in V(T): X_t \cap V(C) \neq \emptyset\}$.
	\end{itemize}
Note that there are at most $\lvert V(C) \rvert \Delta \leq f_1\Delta$ edges of $G$ between $V(C)$ and $N_G(V(C))$.
So $\lvert Y_{i,C} \rvert \leq f_1\Delta$ and $\lvert E_{i,C} \rvert \leq f_1^2\Delta^2$ for every $i \in\mathbb{N}_0$ and $C \in \C_i$.
In addition, $Y_{i,C} \subseteq V(T_{i,C})$ for every $i \in\mathbb{N}_0$ and $C \in \C_i$.
Since $(T,(X_t \cap U_1: t \in V(T)))$ is a tree-decomposition of $G[U_1]$ with width at most $w$, for every $t \in V(T)$ and $i \in\mathbb{N}_0$, there exist at most $w+1$ different members $C$ of $\C_i$ such that $t \in V(T_{i,C})$.
Furthermore, $N_G(V(C)) \cap V_{3i+2} \subseteq \bigcup_{x \in Y_{i,C}} X_x$, so each pair in $E_{i,C}$ consists of two vertices in $\bigcup_{x \in Y_{i,C}}X_x$.
Since every vertex $v$ in $U_2$ is adjacent in $G$ to at most $\Delta$ members of $\bigcup_{i \in\mathbb{N}_0}\C_i$ and every member $C$ of $\bigcup_{i \in\mathbb{N}_0}\C_i$ creates at most $f_1\Delta$ pairs in $E_{i_C,C}$ involving $v$, where $i_C$ is the index such that $C \in \C_{i_{C}}$, every vertex in $U_2$ appears in at most $f_1\Delta \cdot \Delta=f_1\Delta^2$ pairs in $\bigcup_{i \in\mathbb{N}_0,C \in \C_i} E_{i,C}$.

Let $G_2$ be the graph with $V(G_2):=U_2$ and 
$$E(G_2):=E(G[U_2]) \cup \bigcup_{i \in\mathbb{N}_0, C \in \C_i} E_{i,C}.$$
We have shown that \cref{enlarge} is applicable with $(G,T,\X)=(G[V_{3i+2}],T,(X_t \cap V_{3i+2}: t \in V(T)))$ (for each $i \in \mathbb{N}_0$), $k=f_1^2\Delta^2$ and $d=f_1\Delta^2$ and $h=w+1$. 
Thus $G_2$ has maximum degree at most $\Delta_2$ and a tree-decomposition $(T,\X^{(2)})$ with $X^{(2)}_t \supseteq X_t$ for every $t \in V(T)$, where $\X^{(2)} = (X^{(2)}_t: t \in V(T))$, such that for each $i \in \mathbb{N}_0$, $(T,(X^{(2)}_t \cap V_{3i+2}: t \in V(T)))$ is a tree-decomposition of $G_2[V_{3i+2}]$ of width at most $w_2$.
Since there exists no edge of $G_2$ between $V_{3i+2}$ and $V_{3i'+2}$ for $i<i'$, by \cref{ClusteringDegreeTreewidth}, there exists a coloring $c_2: U_2 \rightarrow \{2,3\}$ such that every $c_2$-monochromatic component of $G_2$ contains at most $f_2$ vertices.

Since $X^{(2)}_t \supseteq X_t$ for every $t \in V(T)$, we may assume that $(T,\X^{(2)})$ is a tree-decomposition of $G[U_1 \cup U_2]$ by redefining $X^{(2)}_t$ to be the union of $X^{(2)}_t$ and $X_t \cap U_1$, for every $t \in V(T)$.

For each $i \in\mathbb{N}_0$, let $\C_i'$ be the set of the monochromatic components either of $G[U_1]$ with color 1 with respect to $c_1$ contained in $V_{3i+4}$ or of $G_2$ with color 3 with respect to $c_2$ contained in $V_{3i+2}$.
For each $i \in\mathbb{N}_0$ and $C \in \C_i'$, define the following:
	\begin{itemize}
		\item Let $Y'_{i,C}$ be a minimal subset of $V(T)$ such that for every edge $e$ of $G$ between $V(C)$ and $N_G(V(C)) \cap V_{3i+3}$, there exists a node $t \in Y'_{i,C}$ such that both ends of $e$ belong to $X_t$.
		\item Let $E'_{i,C}$ be the set of all pairs of distinct vertices of $N_G(V(C)) \cap V_{3i+3}$.
		\item Let $T'_{i,C}$ be the subtree of $T$ induced by $\{t \in V(T): X^{(2)}_t \cap V(C) \neq \emptyset\}$.
	\end{itemize}
Note that there are at most $\lvert V(C) \rvert \Delta \leq f_2\Delta$ edges of $G$ between $V(C)$ and $N_G(V(C)) \cap V_{3i+3}$ for every $i \in\mathbb{N}_0$ and $C \in \C_i'$.
So $\lvert Y'_{i,C} \rvert \leq f_2\Delta$ and $\lvert E'_{i,C} \rvert \leq f_2^2\Delta^2$ for every $i \in\mathbb{N}_0$ and $C \in \C_i'$.
In addition, $Y'_{i,C} \subseteq V(T'_{i,C})$ for every $i \in\mathbb{N}_0$ and $C \in \C_i'$.
For every $i \in \mathbb{N}_0$, since $(T,(X^{(2)}_t \cap U_2: t \in V(T)))$ is a tree-decomposition of $G_2$ with $\lvert X^{(2)}_t \cap V_{3i+2} \rvert \leq w_2+1$ and $(T,(X_t^{(2)} \cap U_1: t \in V(T)))$ is a tree-decomposition of $G[U_1]$ with $\lvert X^{(2)}_t \cap V_{3i+4} \rvert \leq w+1 \leq w_2+1$, we know that for every $t \in V(T)$, there exist at most $2(w_2+1)$ different members $C \in \C_i$ such that $t \in V(T'_{i,C})$.
Furthermore, each pair in $E'_{i,C}$ consists of two vertices in $\bigcup_{x \in Y'_{i,C}}X_x^{(2)}$.
Since every vertex $v$ in $U_3$ is adjacent in $G$ to at most $\Delta$ members of $\bigcup_{i \in\mathbb{N}_0}\C_i'$, and every member $C$ of $\bigcup_{i \in\mathbb{N}_0}\C_i'$ creates at most $f_2\Delta$ pairs in $E_{i_C,C}$ involving $v$, where $i_C$ is the index such that $C \in \C_{i_C}'$, every vertex in $U_3$ appears in at most $f_2\Delta^2$ pairs in $\bigcup_{i \in\mathbb{N}_0,C \in \C_i'} E_{i,C}$.

Let $G_3$ be the graph with $V(G_3):=U_3$ and 
$$E(G_3):=E(G[U_3]) \cup \bigcup_{i \in\mathbb{N}_0, C \in \C_i'} E'_{i,C}.$$
We have shown that \cref{enlarge} is applicable with $(G,T,\X)=(G[V_{3i+3}],T,(X_t \cap V_{3i+3}: t \in V(T)))$ (for each $i \in \mathbb{N}_0$), $k=f_2^2\Delta^2$ and $d=f_2\Delta^2$ and $h=2(w_2+1)$. 
Hence $G_3$ has maximum degree at most $\Delta_3$ and a tree-decomposition $(T,\X^{(3)})$ with $X^{(3)}_t \supseteq X_t$ for each $t \in V(T)$, where $\X^{(3)}=(X^{(3)}_t: t \in V(T))$, such that $(T,(X^{(3)}_t \cap V_{3i+3}: t \in V(T)))$ is a tree-decomposition of $G_3[V_{3i+3}]$ of width at most $w_3$ for every $i \in \mathbb{N}_0$.
Since there exists no edge of $G_3$ between $V_{3i+3}$ and $V_{3i'+3}$ for $i<i'$, by \cref{ClusteringDegreeTreewidth}, there exists a coloring $c_3: U_3 \rightarrow \{1,3\}$ such that every monochromatic component of $G_3$ contains at most $f_3$ vertices.

Define $c: V(G) \rightarrow \{1,2,3\}$ such that for every $v \in V(G)$, we have $c(v):=c_j(v)$, where $j$ is the index for which $v \in U_j$.
Now we prove that every $c$-monochromatic component of $G$ contains at most $g(w,\Delta)$ vertices.

Let $D$ be a $c$-monochromatic component of $G$ with color 2.
Since $D$ is connected, for every pair of vertices $u,v \in V(D) \cap U_2$, there exists a path $P_{uv}$ in $D$ from $u$ to $v$.
Since $V(D) \subseteq U_1 \cup U_2$, for every maximal subpath $P$ of $P_{uv}$ contained in $U_1$, 
there exists $i\in\mathbb{N}_0$ and $C \in \C_i$ such that there exists a pair in 
$E_{i,C}$ consisting of the two vertices in $P_{uv}$ adjacent to the ends of $P$.
That is, there exists a path in $G_2[V(D) \cap U_2]$ connecting $u,v$ for every pair $u,v \in V(D) \cap U_2$.
Hence $G_2[V(D) \cap U_2]$ is connected.
So $G_2[V(D) \cap U_2]$ is a $c_2$-monochromatic component of $G_2$ with color 2 and contains at most $f_2$ vertices.
Hence there are at most $f_2\Delta$ edges of $G$ between $V(D) \cap U_2$ and $N_G(V(D) \cap U_2)$.
So $D[V(D) \cap U_1]$ contains at most $f_2\Delta$ components.
Since each component of $D[V(D) \cap U_1]$ is a $c_1$-monochromatic component of $G[U_1]$, it contains at most $f_1$ vertices.
Hence $D[V(D) \cap U_1]$ contains at most $f_2\Delta f_1$ vertices.
Since $D$ has color 2, $V(D) \cap U_3=\emptyset$.
Therefore, $D$ contains at most $(1+f_1\Delta)f_2 \leq g(w,\Delta)$ vertices.

Let $D'$ be a $c$-monochromatic component of $G$ with color $b$, where $b \in \{1,3\}$.
Since $D'$ is connected, by an analogous argument to that in the previous paragraph, $G_3[V(D') \cap U_3]$ is connected. 
So $G_3[V(D') \cap U_3]$ is a $c_3$-monochromatic component of $G_3$ with color $b$ and contains at most $f_3$ vertices.
Hence there are at most $f_3\Delta$ edges of $G$ between $V(D') \cap U_3$ and $N_G(V(D') \cap U_3)$.
So $D[V(D) \cap U_{b'}]$ contains at most $f_3\Delta$ components, where $b'=1$ if $b=1$ and $b'=2$ if $b=3$.
Since each component of $D[V(D) \cap U_{b'}]$ is a $c_{b'}$-monochromatic component of $G[U_{b'}]$, it contains at most $f_2$ vertices.
Hence $D[V(D) \cap U_{b'}]$ contains at most $f_3\Delta f_2$ vertices.
Since $D$ has color $b$, $V(D) \cap U_{b+1}=\emptyset$, where $U_4=U_1$.
Therefore, $D$ contains at most $(1+f_2\Delta)f_3 \leq g(w,\Delta)$ vertices.
This completes the proof.
\end{proof}

\section{Subsequent Work}

The following extension of \cref{ltwdegree} is proved in our companion paper~\citep{LW1}. 

\begin{theorem}[\citep{LW1}]
\label{ltwbasic}
For all $s,t,w\in\mathbb{N}$ there exists $\eta\in\mathbb{N}$ such that every graph with layered treewidth at most $w$ and with no $K_{s,t}$ subgraph is $(s+2)$-colorable with clustering $\eta$. 
\end{theorem}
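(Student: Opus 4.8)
The plan is to generalize the three ingredients of the proof of \cref{ltwdegree} from the hypothesis ``maximum degree at most $\Delta$'' to ``no $K_{s,t}$ subgraph''. The first ingredient is a bounded-treewidth base case: for all $s,t,w\in\mathbb{N}$ there is $\eta_0$ such that every graph with treewidth at most $w$ and no $K_{s,t}$ subgraph is $(s+1)$-colorable with clustering $\eta_0$. This is the $K_{s,t}$-subgraph-free analogue of \cref{ClusteringDegreeTreewidth}, which is precisely its $s=1$ case, since a graph has no $K_{1,t}$ subgraph if and only if it has maximum degree at most $t-1$. Establishing this base case is itself the bulk of the work; I would attempt it by induction on $s$, exploiting that a tree-decomposition of width $w$ yields a vertex ordering in which every vertex has at most $w$ earlier neighbors, all contained in a single bag, to control how copies of $K_{s,t}$ sit relative to the decomposition and to peel off a suitable color class.

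With the base case in hand, the second ingredient is the layering trick, now modulo $s+2$ instead of modulo $3$. Fix a layering $(V_1,V_2,\dots)$ and a tree-decomposition $(T,\X)$ with $\abs{X_x\cap V_i}\le w$ for all $x,i$. Give every layer $V_i$ the palette $[s+2]\setminus\{1+(i\bmod(s+2))\}$, so that $s+2$ colors are used in total, each color is forbidden on a periodic sub-sequence of layers, and hence (since edges only join consecutive layers) every monochromatic component is confined to at most $s+1$ consecutive layers. Process the $s+2$ residue classes in a fixed order. When coloring a class --- which induces a graph of treewidth at most $w$, so the base case applies --- first enlarge that graph, as in the construction of $G_2$ and $G_3$ in the proof of \cref{ltwdegree}, to record, for each already-colored class and each monochromatic component $C$ already formed in it, the vertices of the current class adjacent to $V(C)$; a coloring of the enlarged graph then yields monochromatic components of $G$ of bounded size, provided the enlargement keeps treewidth (and the relevant biclique parameters) bounded.

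The enlargement step is where the real difficulty lies, and it is what distinguishes this theorem from \cref{ltwdegree}. In \cref{enlarge} one adds all edges between pairs of vertices of $N_G(V(C))$ in the next layer, and this preserves bounded treewidth \emph{only because} bounded degree forces $\abs{N_G(V(C))}$ to be bounded. Under $K_{s,t}$-freeness alone this breaks down entirely: the star $K_{1,n}$ is $K_{s,t}$-free for every $s\ge2$, so a bounded-size monochromatic component can have an unbounded neighborhood in the next layer, and then both ``add all pairwise edges'' and ``add one new vertex adjacent to $N_G(V(C))$'' blow up the treewidth. The way I would get around this is to split $N_G(V(C))$ into its \emph{dense} part --- the vertices with at least $s$ neighbors in $V(C)$, of which there are at most $\binom{|V(C)|}{s}(t-1)$ by $K_{s,t}$-freeness, hence boundedly many, to be handled by the old edge-addition --- and its \emph{sparse} part, the vertices with fewer than $s$ neighbors in $V(C)$, which must be shown not to contribute unboundedly to the growth of a single monochromatic component. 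For the latter I would contract each bounded-size monochromatic component $C$ to a single vertex $v_C$: this raises layered treewidth by only a bounded amount since $C$ lies in at most two layers, and it keeps the graph $K_{s,t'}$-free for some $t'$ that is only a bounded power of $t$, because a large biclique on $s$ of the new vertices would, after pigeonholing the (boundedly many) witness-tuples of adjacencies inside the contracted components, produce a $K_{s,t}$ in $G$ itself. Threading this contraction consistently through all $s+2$ classes and through the induction on $s$ in the base case is the crux; it is also why the clustering here is stated merely as ``some $\eta$'' rather than as an explicit polynomial, and why the full argument is deferred to \citep{LW1}.
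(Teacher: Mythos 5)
The paper you are reading does not actually prove \cref{ltwbasic}: it is stated with a citation to the companion paper~\citep{LW1}, and the text explicitly says ``the proof of \cref{ltwbasic} is much more complicated than that of \cref{ltwdegree}'' and that the original manuscript ``has now been split into the present paper and a separate paper proving \cref{ltwbasic}.'' So there is no in-paper argument to compare against; I can only assess your sketch on its own terms.

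Your high-level plan (a $K_{s,t}$-free, bounded-treewidth base case giving $(s+1)$ colors, the layering trick modulo $s+2$, and then an enlargement step to pass information between layer classes) is a sensible way to try to generalize the proof of \cref{ltwdegree}, and you correctly identify the enlargement step as the genuine obstacle. However, the specific fix you propose for it --- contract each bounded-size monochromatic component $C$ to a single vertex and claim the result stays $K_{s,t'}$-free for bounded $t'$ --- is false as stated, and the pigeonhole argument you give only addresses the case in which all $s$ vertices of the small side of the biclique are contracted vertices. Here is a counterexample to the claim: take vertices $a_1,a_2$ and pairwise disjoint two-vertex components $C_j=\{p_j,q_j\}$ for $j=1,\dots,n$, with $a_1$ adjacent to every $p_j$, $a_2$ adjacent to every $q_j$, and $p_jq_j$ an edge. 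Then $a_1,a_2$ have no common neighbor, so the graph is $K_{2,2}$-free (taking $s=t=2$); yet after contracting each $C_j$ the vertices $a_1,a_2$ are both adjacent to all $n$ contracted vertices, producing $K_{2,n}$. No choice of a single witness inside each $C_j$ can rescue the pigeonhole, because no vertex of $C_j$ sees both $a_1$ and $a_2$. So your invariant ``$K_{s,t'}$-free for a controlled $t'$'' is not preserved under contraction, and some different device is needed to tame the sparse part of $N_G(V(C))$; this is exactly the kind of additional structural work the paper alludes to when it warns that the proof of \cref{ltwbasic} is considerably more complicated than that of \cref{ltwdegree}.
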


Graphs with no $K_{1,t}$ subgraph are exactly those with maximum degree less than $t$. Thus, \cref{ltwbasic} with $s=1$ says that graphs with bounded layered treewidth and bounded maximum degree are 3-colourable with bounded clustering, as in \cref{ltwdegree}. On the other hand, the proof of \cref{ltwbasic} is much more complicated than that of \cref{ltwdegree} and the clustering bound in \cref{ltwbasic} is much larger than the polynomial clustering bound in \cref{ltwdegree}.

\cref{ltwdegree,ltwbasic} (and their proofs) were first presented in reference~\citep{LW1} in 2019. This paper has now been split into the present paper and a separate paper proving \cref{ltwbasic}. Other proofs of \cref{ltwdegree} have since been obtained. One such proof is obtained via weak diameter coloring~\citep{BBEGLPS,Liu21a}, although this proof does not give a polynomial bound for the clustering. Another proof, due to \citet{DEMWW22}, improves the $O(w^{19}\Delta^{37})$ bound in \cref{ltwdegree} to $O(w^3\Delta^2)$.

Recall that \citet{LO17} proved that every $K_t$-minor free graph with bounded maximum degree is 3-colorable with bounded clustering. New proofs of this result are included in \citep{BBEGLPS,Liu21a,Liu22}, where the proof in \citep{Liu22} relies on \cref{ltwdegree} in the present paper.

%%%%%%%%%%%%%%%%%%%%%%%%%
\bibliographystyle{DavidNatbibStyle}
\bibliography{DavidBibliography}
%%%%%%%%%%%%%%%%%%%%%%%%%
\end{document}